\newcommand{\john}[1]{\todo[color=green!30]{#1 \\ \hfill --- J.}}
\theoremstyle{plain}
\newtheorem{theorem}{Theorem}[section]
\newtheorem{lemma}[theorem]{Lemma}
\newtheorem{proposition}[theorem]{Proposition}
\newtheorem{corollary}[theorem]{Corollary}
\theoremstyle{definition}
\newtheorem{question}[theorem]{Question}
\newtheorem{problem}[theorem]{Problem}
\newcommand{\A}{\mathcal{A}}
\newcommand{\U}{\mathcal{U}}
\DeclareMathOperator{\Spec}{Spec}
\DeclareMathOperator{\Trop}{Trop}
\DeclareMathOperator{\Mat}{Mat}
\newcommand{\x}{\mathbf{x}}
\newcommand{\y}{\mathbf{y}}
\newcommand{\bbZ}{\mathbb{Z}}
\newcommand{\bbP}{\mathbb{P}}
\newcommand{\bbA}{\mathbb{A}}
\renewcommand{\d}{\dagger}
\begin{document}

\title{Upper cluster algebras and choice of ground ring}
\author{Eric Bucher}
\address{Department of Mathematics, Michigan State University, East Lansing, MI 48824.}
\email{ebuche2@math.msu.edu}
\author{John Machacek}
\address{Department of Mathematics, Michigan State University, East Lansing, MI 48824.}
\email{machace5@math.msu.edu}
\author{Michael Shapiro}
\address{Department of Mathematics, Michigan State University, East Lansing, MI 48824.}
\email{}

\begin{abstract}
We initiate a study of the dependence of the choice of ground ring on the question on whether a cluster algebra is equal to its upper cluster algebra.
A condition for when there is equality of the cluster algebra and upper cluster algebra is given by using a variation of Muller's theory of cluster localization.
An explicit example exhibiting dependence on the ground ring is provided.
We also present a maximal green sequence for this example. 
\end{abstract}
\subjclass[2010]{13F60}
\maketitle

\section{Introduction}
Cluster algebras were invented by Fomin and Zelevinsky to study the total positivity of Lusztig's canonical basis.
At its purest form what a cluster algebra, $\mathcal{A}$, does is attempt to study an infinite dimensional algebra by creating a combinatorics that separates the generators of $\mathcal{A}$ into \emph{seeds}.
These generators are referred to as \emph{cluster variables}.
The combinatorial data is a recipe for how to travel between the seeds.
This process from turning one seed into another is what is known as \emph{mutation}.
One of the most fascinating and powerful results in the study of such algebras is what is known as the Laurent phenomenon.
To paraphrase, this states that any cluster variable can be expressed as a Laurent polynomial in the variables from any seed in the cluster algebra.
The Laurent phenomenon leads to a very natural object called the upper cluster algebra, denoted $\mathcal{U}$.
Cluster algebras and upper cluster algebras will be precisely defined in Section~\ref{sec:prelim}.
This algebra, $\mathcal{U}$, is the intersection of the ring of Laurent polynomials for each seed. Due to the Laurent phenomenon we know that $\mathcal{A} \subseteq \mathcal{U}$.
At times we have more, and $\mathcal{A}=\mathcal{U}$.
This can be a powerful result.
Cluster structures can be observed in coordinate rings of many natural algebraic varieties.
Some examples include Grassmannians~\cite{Scott} and double Bruhat cells of complex simple Lie groups~\cite{BFZ}.
The upper cluster algebra can, as it does for Double Bruhat cells~\cite[Theorem 2.10]{BFZ}, coincide with the (homogeneous) coordinate ring of the variety.
When we have equality $\A = \U$, such coordinate rings can be approached by utilizing the explicit combinatorial description of the generators of $\A$.

Muller proves that $\mathcal{A}=\mathcal{U}$ for locally acyclic cluster algebras~\cite{MullerSIGMA}.
For a certain Cremmer-Gervais cluster algebra constructed by Gekhtman, Shapiro, and Vainshtein it was shown that $\mathcal{U} \neq \mathcal{A}$~\cite{GSV-CGshort, GSV-CGlong}. 
Further exploring these two results, we realized that the Cremmer-Gervais cluster algebra considered by Gekhtman, Shapiro, and Vainshtein is locally acyclic.
We reconciled these competing statements, by realizing that the cluster algebras had the same initial seed but were being generated over different \emph{ground rings}.
In this paper we wish to explore what impact the ground ring has on the question of whether or not $\mathcal{U}=\mathcal{A}$. 

In Section~\ref{sec:prelim} of this paper, we will present the definition of cluster algebras over general ground rings as well as the definition of the upper cluster algebra.
We will generalize some aspects of the theory of locally acyclic cluster algebras to more general ground rings in Section~\ref{s:locallyacyclic}.
A condition for $\A = \U$ is given in Theorem~\ref{thm:A=U}, and a condition implying $\A = \U$ for certain acyclic cluster algebras of geometric type in deduced from this theorem in Corollary~\ref{cor:A=Ugeo}.
The previously mentioned Cremmer-Gervais cluster algebra is considered in Section~\ref{s:Poisson}.
Proposition~\ref{prop:CG} uses this Cremmer-Gervais cluster algebra to given an explicit example illustrating the sensitivity of the $\A = \U$ question on the choice of ground ring.
In Section~\ref{s:reddening} we present a maximal green sequence for the quiver defining the Cremmer-Gervais cluster algebra from Section~\ref{s:Poisson}.
It is thought there may be a relationship between the existence of a maximal green sequence, or more generally a reddening sequence, and the equality $\A = \U$.
Our example shows any such relationship will be dependent on the chosen ground ring.
We conclude with Section~\ref{sec:conclusion} offering some potential directions for future work.

We think the results presented here are only a first step toward studying upper cluster algebras and the dependence on ground rings.
Also, we believe this is a natural topic to consider.
The reason for the choice of ground ring of the Cremmer-Gervais cluster algebra constructed by Gekhtman, Shapiro, and Vainshtein is to ensure that the cluster algebra constructed consists only of regular functions on $SL_3$.
The ground ring must be enlarged to apply Muller's result to conclude $\A = \U$.
Enlarging the ground ring forces one to consider regular functions not on $SL_3$, but rather on some open subset.
So, considering different ground rings is geometrically meaningful.
Furthermore, if $\A = \U$ this gives algebraic and geometric information by knowing that $\A$ is an intersection of Laurent polynomial rings.
For example, if the chosen ground ring is a normal domain, then $\U$ will be a normal domain.
This will be shown in Proposition~\ref{prop:normal}.
In this case $\Spec(\U)$ is a normal scheme; so, $\A$ and $\Spec(\A)$ will have these properties as well when $\A = \U$.

\section{Notation and preliminaries}
\label{sec:prelim}

Let $\mathbb{P}$ be a semifield. This means that $\mathbb{P}$ is a torsion free abelian group whose operation is written multiplicatively. Additionally, $\mathbb{P}$ is equipped with an auxiliary addition $\oplus$, which is commutative associative and distributive over the multiplication of $\mathbb{P}$. We now want to consider a \emph{ground ring}, $\bbZ \subseteq \mathbb{A} \subseteq\mathbb{Z}\mathbb{P}$. This is sometimes called the coefficient ring of the cluster algebra. 

Let $\mathcal{F}$ be a field which contains $\mathbb{Z}\mathbb{P}$. A \emph{seed} of rank $n$ in $\mathcal{F}$ is a triple $(\mathbf{x},\mathbf{y},B)$ consisting of three parts:

\begin{itemize}
\item the \emph{cluster} $\mathbf{x} = \{x_1,x_2, \dots , x_n\}$ is an $n$-tuple in $\mathcal{F}$ which freely generates $\mathcal{F}$ as a field over the fraction field of $\mathbb{Z}\mathbb{P}$,

\item the \emph{coefficients} $\mathbf{y} = \{y_1,y_2, \dots , y_n\}$ is an $n$-tuple in $\mathbb{P}$, and

\item the \emph{exchange matrix} $B$ is an integral, skew-symmetrizable $n \times n$ matrix. 

\end{itemize} 

A seed $(\mathbf{x},\mathbf{y}, B)$ may be mutated at an index $1 \leq k \leq n$, to produce a new seed $(\mu_k(\mathbf{x}),\mu_k(\mathbf{y}),\mu_k(B))$. We say that the seed has been mutated in the direction $k$. Meaning that there are $n$ different mutations that can be applied to a given seed. The mutation is given by the following rules:

\begin{itemize}

\item $\mu_k(\mathbf{x}) := \{x_1,x_2,\dots,x_{k-1},x_k'x_{k+1},\dots, x_n\}$, where
\[x_k':=\frac{y_k\prod x_j^{[B_{jk}]_+} + \prod x_j^{[-B_{jk}]_+}}{(y_k \oplus1)x_k}\]

\item $\mu_k(\mathbf{y}) : = \{y_1',y_2', \dots y_n'\},$ where
\[ y'_i:= 
\begin{cases} 
y_i^{-1} & \text{ if } i=k \\
y_i y_k^{[B_{ki}]_+}(y_k \oplus 1)^{-B_{ki}} & \text{ if } i \neq k\\
\end{cases}\]

\item $\mu_k(B)$ is defined by 

\[\mu_k(B)_{ij} = 
\begin{cases}
-B_{ij} & \text{ if } i=k \text{ or } j=k, \\
B_{ij} + \frac{1}{2}( | B_{ik}|B_{kj}+B_{ik}|B_{kj}|) & \text{ otherwise }\\
\end{cases}
\]

\end{itemize}

Notice that $\mu_k^2$ is the identity. We say that two seeds are \emph{mutation equivalent} if one can be obtained by a sequence of mutations, up to permuting the indices of the seed. 

Now that we have established the fundamental definitions of seeds and mutation we can define a \emph{cluster algebra}. Given a seed $(\mathbf{x},\mathbf{y},B)$ we will call the union of all the seeds which are mutation equivalent to $(\mathbf{x},\mathbf{y},B)$ a set of \emph{cluster variables} in $\mathcal{F}$. The \emph{cluster algebra}, $\mathcal{A}_{\bbA}(\mathbf{x},\mathbf{y},B)$, is the unital $\bbA$-subalgebra of $\mathcal{F}$ generated by the cluster variables. Notice that since we are allowed to freely mutate when generating the cluster variables, that two mutation equivalent seeds will generate the same cluster algebra. For this reason sometimes it is common to leave the seed out of the notation and simply refer to the algebra as $\mathcal{A}_{\bbA}$ or just $\A$ when the choice of ground ring is clear. 

\subsection{The Laurent phenomenon}
The Laurent phenomenon~\cite[Theorem 3.1]{FZI} is a very important property of cluster algebras. It states that if our ground ring for $\mathcal{A}$ is $\mathbb{Z}\mathbb{P}$, then we have that $\mathcal{A}$ is a subalgebra of $\mathcal{A}[x_1^{- 1},x_2^{- 1},\dots , x_n^{- 1}]=\mathbb{ZP}[x_1^{\pm 1},x_2^{\pm 1},\dots , x_n^{\pm 1}]$.

In the initial work of Fomin and Zelevinsky they give a more generalized Laurent phenomenon, and outline conditions that allow for the Laurent phenomenon to hold even if we work over a potentially smaller ground ring $\mathbb{A} \subsetneq \mathbb{ZP}$~\cite[Theorem 3.2]{FZI}. Throughout this paper, when we discuss the cluster algebra over a ground ring $\mathbb{A}$, we will assume that the Laurent phenomenon holds over $\bbA$ and that $\bbA$ contains all coefficients appearing in exchange relations. In this case we have:
\begin{equation}
\mathcal{A} \hookrightarrow \mathcal{A}[x_1^{- 1},x_2^{- 1},\dots , x_n^{- 1}]=\mathbb{A}[x_1^{\pm 1},x_2^{\pm 1},\dots , x_n^{\pm 1}]
\label{conditionLP}
\end{equation}
and
\begin{equation}
\frac{y_i}{1 \oplus y_i}, \frac{1}{1 \oplus y_i} \in \bbA
\label{conditionCoeff}
\end{equation}
for any seed $(B,\x,\y)$ of the cluster algebra.

\subsection{The upper cluster algebra}
Now we will consider an algebra closely related to the $\mathcal{A}$. This is the \emph{upper cluster} algebra denoted by $\mathcal{U}$ or by $\U_{\bbA}((B,\x,\y))$. 

We define this as follows:

\[ \mathcal{U} := \bigcap_{\mathbf{x}\in \mathcal{A}} \mathbb{A}[x_1^{\pm 1},x_2^{\pm 1},\dots , x_n^{\pm 1}]. \]

Since we have chosen a ground ring, $\mathbb{A}$, where the criteria for the Laurent phenomenon are met, we see that injections from each seed mean that $\mathcal{A} \subseteq \mathcal{U}$. There are many occurrences where $\mathcal{A}=\mathcal{U}$ which gives an alternative way of understanding the cluster algebra $\A$.
The choice of ground ring plays a substantial role in deciding whether or not we are in this nice situation. In the rest of this paper we will explore exactly how the choice of ground ring impacts whether $\mathcal{A}=\mathcal{U}$.

We now offer a proposition on normality generalizing~\cite[Proposition 2.1]{MullerAdv} to our situation.
Recall, an integral domain is called a \emph{normal domain} if it is integrally closed in its field of fractions.
A semifield $\bbP$ is always torsion-free, and thus any $\bbZ \subseteq \bbA \subseteq \bbZ\bbP$ is an integral domain.

\begin{proposition}
If $\bbA$ is a normal domain, then $\U_{\bbA}$ is a normal domain.
\label{prop:normal}
\end{proposition}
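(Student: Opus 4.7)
The plan is to reduce the statement to two classical commutative-algebra facts and combine them via the definition of $\U_{\bbA}$. The two facts are: (i) the Laurent polynomial ring $R[x_1^{\pm 1}, \ldots, x_n^{\pm 1}]$ over a normal domain $R$ is again a normal domain; and (ii) any intersection $\bigcap_i R_i$ of normal subdomains of a common ambient field $K$ is a normal domain.

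For (i), the polynomial ring $\bbA[x_1, \ldots, x_n]$ is normal whenever $\bbA$ is, because normality is preserved under polynomial extensions of integral domains. The Laurent polynomial ring is then the localization of this ring at $x_1 \cdots x_n$, and localizations of normal domains remain normal. Since each cluster $\x$ freely generates $\mathcal{F}$ over $\operatorname{Frac}(\bbZ\bbP)$, the subring $\bbA[x_1^{\pm 1}, \ldots, x_n^{\pm 1}]$ of $\mathcal{F}$ really is (abstractly) such a Laurent polynomial ring, and so is a normal subdomain of $\mathcal{F}$.

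For (ii), if $u \in \operatorname{Frac}\bigl(\bigcap_i R_i\bigr) \subseteq K$ is integral over $\bigcap_i R_i$, then $u$ is a fortiori integral over each $R_i$ and lies in $\operatorname{Frac}(R_i) \subseteq K$; the normality of each $R_i$ then forces $u \in R_i$ for every $i$, and thus $u \in \bigcap_i R_i$. Applying (ii) with $K = \mathcal{F}$ and letting $R_i$ range over the Laurent polynomial rings $\bbA[x_1^{\pm 1}, \ldots, x_n^{\pm 1}]$ indexed by the clusters of $(\x, \y, B)$ yields the conclusion, since by construction $\U_{\bbA}$ is exactly this intersection inside $\mathcal{F}$. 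I do not anticipate a significant obstacle: the argument is formal, with the only substantive ingredient being the standard preservation of normality under polynomial extensions and localization.
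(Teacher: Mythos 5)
Your proposal is correct and follows essentially the same route as the paper: reduce to the normality of each Laurent polynomial ring $\bbA[x_1^{\pm 1},\dots,x_n^{\pm 1}]$ (polynomial extension plus localization) and then use that an intersection of normal subdomains of the common field $\mathcal{F}$ is normal. The only difference is that you spell out the intersection step, which the paper treats as immediate from the definition.
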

\begin{proof}
Assume $\bbA$ is a normal domain.
From the definition it follows that the intersection of normal domains is again a normal domain.
So, it suffices to show that $\bbA[x_1^{\pm 1}, x_2^{\pm 1}, \dots, x_n^{\pm 1}]$ is a normal domain for any seed $(B, \x, \y)$.
This is true since any polynomial ring over a normal domain is a normal domain~\cite[Lemma 10.36.8]{Stacks} and any localization of a normal domain is a normal domain~\cite[Lemma 10.36.5]{Stacks}.
\end{proof}

\subsection{Geometric type}

The \emph{tropical semifield} $\Trop(z_1, z_2, \dots, z_m)$ is the free abelian group generated by $z_1, z_2, \dots, z_m$ with auxiliary addition given by
\[\prod_{i=1}^m z_i^{a_i} \oplus \prod_{i=1}^m z_i^{b_i} = \prod_{i=1}^m z_i^{\min(a_i,b_i)}.\]
A cluster algebra is said to be of \emph{geometric type} if it is defined over a tropical semifield.
In this case $\bbZ\bbP = \bbZ[z_1^{\pm 1}, z_2^{\pm 1}, \dots, z_m^{\pm 1}]$ is the Laurent polynomial ring in $z_1, z_2, \dots, z_m$.
We are particularly interested in the polynomial ground ring $\bbZ[z_1, z_2, \dots, z_m]$ which we will denote by $\bbZ\bbP_+$.
For cluster algebras of geometric type there is a sharpening of the Laurent phenomenon giving $\A \subseteq \bbZ\bbP_+[x_1^{\pm 1}, x_2^{\pm 1}, \dots, x_n^{\pm 1}]$.
This sharpening of the Laurent phenomenon can be found in~\cite[Theorem 3.3.6]{FWZ}.

A \emph{quiver} is a directed graph without loops or directed $2$-cycles, but parallel arrows are allowed.
When $(B, \x, \y)$ is a seed for a cluster algebra of geometric type and the matrix $B$ is skew-symmetric, we will often consider a quiver which is equivalent data to the seed $(B, \x, \y)$.
The $n \times n$ skew-symmetric matrix $B$ is considered as a signed adjacency matrix of a quiver.
That is, the quiver $Q$ has $B_{ji}$ arrows $i \to j$ where negative arrows correspond to reversing the direction.
The generators $z_1, z_2, \dots, z_m$ of $\bbP$ correspond to additional vertices of $Q$ called \emph{frozen vertices}.
Non-frozen vertices are known as \emph{mutable vertices}.
Mutable vertices of the quiver will be depicted as circles while frozen vertices are represented by squares.
There are no arrows between frozen vertices.
Arrows between mutable and frozen vertices are obtained by considering $\y$.
If $y_i = z_1^{a_i}z_2^{a_2} \cdots z_m^{a_m}$, then the mutable vertex $i$ has $a_j$ arrows $i \to z_j$.
As an example if $\bbP = \Trop(z_1, z_2)$ while our seed has $\y = (z_1z_2^{-1}, z_2^{-1})$ and
\[B=\begin{bmatrix}
0 & -2\\
2 & 0
\end{bmatrix} \]
the corresponding quiver $Q$ would be the quiver pictured in Figure~\ref{fig:example}.

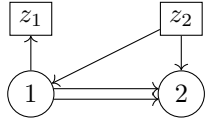
\begin{figure}
\begin{tikzpicture}
\node at (0,0) [circle, draw] (1) {$1$};
\node at (2,0) [circle, draw] (2) {$2$};
\node at (0,1) [rectangle, draw] (z1) {$z_1$};
\node at (2,1) [rectangle, draw] (z2) {$z_2$};
\draw[->] (0.3,0.07) to (1.7,0.07);
\draw[->] (0.3,-0.07) to (1.7,-0.07);
\draw[->] (1) to (z1);
\draw[->] (z2) to (1);
\draw[->] (z2) to (2);
\end{tikzpicture}
\caption{A quiver.}
\label{fig:example}
\end{figure}

\section{Locally isolated and Locally acyclic cluster algebras}
\label{s:locallyacyclic}
We now review Muller's notion of locally acyclic cluster algebras~\cite{MullerAdv}.
This section closely follows~\cite{MullerSIGMA} while describing some changes needed to adapt the theory of locally acyclic cluster algebras for ground rings other than $\bbZ\bbP$.
Let $(B,\x,\y)$ be a seed of rank $n$ and $\bbA$ be a ground ring satisfying (\ref{conditionLP}) and (\ref{conditionCoeff}).
Denote the corresponding cluster algebra by $\A = \A_{\bbA}(B,\x,\y)$ and upper cluster algebra by $\U = \U_{\bbA}(B,\x,\y)$.

The \emph{freezing} of $\A$ at $x_n \in \x$ is the cluster algebra $\A^{\dagger} = \A_{\bbA^{\d}}(B^{\d}, \x^{\d}, \y^{\d})$ defined as follows
\begin{itemize}
\item The new semifield is $\bbP^{\d} = \bbP \times \bbZ$  with $x_n$ as the generator of the free abelian group $\bbZ$. The auxiliary addition is extended as
\[(p_1 x_n^a) \oplus (p_2 x_n^b) = (p_1 \oplus p_2)x_n^{\min(a,b)}.\]
\item The new ground ring $\bbA^{\d} \subseteq \bbZ\bbP^{\d}$ is $\bbA^{\d} = \bbA[x_n^{\pm 1}]$.
\item The new ambient field is $\mathcal{F}^{\d} = \mathbb{Q}(\bbP^{\d}, x_1, x_2, \dots, x_{n-1})$ and the new cluster is $\x^{\d} = (x_1, x_2, \dots, x_{n-1})$.
\item The new coeffients are $\y^{\d} = (y_1^{\d}, y_2^{\d}, \dots, y_{n-1}^{\d})$ where $y_i^{\d} = y_i x_n^{B_{ni}}$.
\item The new exchange matrix $B^{\d}$ is obtained from $B$ by deleting the $n$th row and column.
\end{itemize}
In this setting the upper cluster algebra corresponding to $\A^{\d}$ will be denoted $\U^{\d}$.
Notice in a cluster algebra arising from a quiver that a freezing exactly corresponds to replacing the mutable vertex labeled by $x_n$ with a frozen vertex.
By considering a permutation of indices we can freeze at any $x_i \in \x$.
We can freeze at some subset of cluster variables by iteratively  freezing at individual cluster variables.
This process is independent of the order of freezing.
A freezing $\A^{\d}$ of $\A$ at $\{x_{i_1}, x_{i_2}, \dots, x_{i_m}\} \in \x$ is called a \emph{cluster localization} if $\A^{\d} = \A[(x_{i_1}x_{i_2}\cdots x_{i_m})^{-1}]$.
A \emph{cover} of $\A$ is a collection $\{A_i\}_{i \in I}$ of cluster localizations such that for any prime ideal $P \subseteq \A$ there exists $i \in I$ where $\A_iP \subsetneq \A_i$.
Lemma~\ref{lem:freezelocal} below is not exactly~\cite[Lemma 1]{MullerSIGMA}, but follows immediately and will be all that is used for our purposes.

\begin{lemma}[{\cite[Lemma 1]{MullerSIGMA}}]
If $\A^{\d}$ is a freezing of a cluster algebra $\A$ at cluster variables $\{x_{i_1}, x_{i_2}, \dots, x_{i_m}\}$ and $\A^{\d} = \U^{\d}$, then $\A^{\d} = \A[(x_{i_1}, x_{i_2}, \dots, x_{i_m})^{-1}]$ is a cluster localization.
\label{lem:freezelocal}
\end{lemma}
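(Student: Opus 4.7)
The plan is to first reduce to the case of freezing at a single cluster variable, since freezing at a set is defined by iterated single freezings and localization behaves well under iteration: $\A[x_{i_1}^{-1}][x_{i_2}^{-1}]\cdots[x_{i_m}^{-1}] = \A[(x_{i_1}x_{i_2}\cdots x_{i_m})^{-1}]$. So it suffices, after relabeling, to prove that if $\A^{\d}$ is the freezing of $\A$ at $x_n$ and $\A^{\d} = \U^{\d}$, then $\A^{\d} = \A[x_n^{-1}]$, and then apply this repeatedly (using that each intermediate freezing is again a cluster algebra whose upper cluster algebra equals the localization provided the hypothesis is propagated).

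For the inclusion $\A[x_n^{-1}] \subseteq \A^{\d}$, I would show that every cluster variable of $\A$ belongs to $\U^{\d}$. Every cluster of $\A^{\d}$ arises from a cluster of $\A$ reachable by a sequence of mutations avoiding direction $n$, namely by discarding $x_n$ and absorbing it into the new ground ring $\bbA^{\d} = \bbA[x_n^{\pm 1}]$. Given any cluster variable $v$ of $\A$, the Laurent phenomenon for $\A$ over $\bbA$ expresses $v$ as a Laurent polynomial in $(x_1,\dots,x_{n-1},x_n)$ with coefficients in $\bbA$; rewriting $x_n^{\pm 1}$ as an element of $\bbA^{\d}$ gives a Laurent expression in $(x_1,\dots,x_{n-1})$ over $\bbA^{\d}$. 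Applied to every cluster of $\A^{\d}$, this shows $v \in \U^{\d}$, and by hypothesis $\U^{\d} = \A^{\d}$. Since $x_n^{-1} \in \bbA^{\d} \subseteq \A^{\d}$ as well, we conclude $\A[x_n^{-1}] \subseteq \A^{\d}$.

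For the reverse inclusion $\A^{\d} \subseteq \A[x_n^{-1}]$, observe that every cluster variable of $\A^{\d}$ is obtained in the ambient field by a sequence of mutations in directions $1,\ldots,n-1$ applied to the initial seed; up to recording $x_n$ as a coefficient rather than as a cluster variable, these mutations produce exactly the same elements of $\mathcal{F}$ as the corresponding mutations in $\A$. Hence the generators of $\A^{\d}$ lie in $\A$, while the new ground ring $\bbA^{\d} = \bbA[x_n^{\pm 1}]$ sits inside $\A[x_n^{-1}]$ because $x_n$ is already a cluster variable of $\A$. Combining the two inclusions gives $\A^{\d} = \A[x_n^{-1}]$, which is the definition of a cluster localization.

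The main obstacle I anticipate is the bookkeeping between mutation in $\A$ and mutation in $\A^{\d}$, specifically verifying that the redefinition $y_i^{\d} = y_i x_n^{B_{ni}}$ together with the extended auxiliary addition $(p_1 x_n^a)\oplus(p_2 x_n^b) = (p_1 \oplus p_2)x_n^{\min(a,b)}$ makes the exchange relation for $\mu_j(B^{\d}, \x^{\d}, \y^{\d})$ at any index $j \neq n$ produce literally the same element of $\mathcal{F}$ as $\mu_j(B,\x,\y)$ produces in $\A$ (with $x_n$ reinterpreted as a coefficient). This is essentially the content of the remark in the excerpt that freezing corresponds to turning a mutable vertex into a frozen one, but making it rigorous in the general semifield setting is the most delicate step; once it is in place, the two inclusions above are immediate.
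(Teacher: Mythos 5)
The paper itself gives no proof of this lemma: it is quoted from Muller with the remark that the stated form ``follows immediately'' from \cite[Lemma 1]{MullerSIGMA}. Measured against the standard argument, the core of your proposal is right and is essentially Muller's: clusters of $\A^{\d}$ are clusters of $\A$ with the frozen variables stripped off, so the Laurent phenomenon over $\bbA$ applied to each such cluster gives $\A \subseteq \U^{\d}$ and hence $\A[(x_{i_1}\cdots x_{i_m})^{-1}] \subseteq \U^{\d} = \A^{\d}$; conversely the cluster variables of $\A^{\d}$ are literally cluster variables of $\A$ and $\bbA^{\d} \subseteq \A[(x_{i_1}\cdots x_{i_m})^{-1}]$. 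The ``delicate step'' you flag is real but routine: with $y_j^{\d} = y_j x_n^{B_{nj}}$ the extended addition gives $y_j^{\d} \oplus 1 = (y_j \oplus 1)\,x_n^{\min(0,B_{nj})}$, and clearing $x_n^{[-B_{nj}]_+}$ from the denominator of the exchange relation of $\A^{\d}$ at $j$ recovers exactly the exchange relation of $\A$ at $j$, using $B_{nj} + [-B_{nj}]_+ = [B_{nj}]_+$.

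The one step that does not work as written is your opening reduction to a single frozen variable. The hypothesis $\A^{\d} = \U^{\d}$ concerns only the freezing at the \emph{full} set $\{x_{i_1},\dots,x_{i_m}\}$; it is neither given nor obvious that the intermediate freezing at $x_{i_1}$ alone satisfies the analogous equality, so ``provided the hypothesis is propagated'' is precisely the missing ingredient, and the induction cannot get started. Fortunately the reduction is unnecessary: both inclusions in your second and third paragraphs apply verbatim to the freezing at the whole set, since every cluster of $\A^{\d}$ comes from a cluster of $\A$ reachable by mutations avoiding all frozen directions, and $\bbA^{\d} = \bbA[x_{i_1}^{\pm 1},\dots,x_{i_m}^{\pm 1}]$ lies in $\A[(x_{i_1}\cdots x_{i_m})^{-1}]$ because each $x_{i_j}$ is already a cluster variable of $\A$. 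Run the two-inclusion argument once on the full freezing and drop the induction.
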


The seed $(B, \x, \y)$ is called \emph{isolated} if $B$ is the zero matrix.
A cluster algebra defined by an isolated seed is also referred to as \emph{isolated}.
In terms of quivers, isolated means that there are no arrows between mutable vertices.
The seed $(B, \x, \y)$ is said to be \emph{acyclic} if there are not $i_1, i_2, \dots, i_{\ell} \in \{1,2,\dots, n\}$ with $B_{i_{j+1} i_j} > 0$ for $1 \leq j < \ell$ and $i_1 = i_{\ell}$.
A cluster algebra defined by an acyclic seed is called an acyclic cluster algebra.
A \emph{locally isolated}, respectively \emph{locally acyclic}, cluster algebra is a cluster algebra for which there exists a cover by isolated, respectively acyclic, cluster algebras.

In~\cite{MullerSIGMA} the only ground ring considered is $\bbZ\bbP$; however, proofs of the following results go through without change for more general ground rings.

\begin{lemma}[{\cite[Lemma 2]{MullerSIGMA}}]
Let $\{\A_i\}_{i\in I}$ be a cover of $\A$. If $\A_i=\U_i$ for all $i \in I$, then $\A = \U$.
\label{lem:Ucover}
\end{lemma}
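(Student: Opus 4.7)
The Laurent phenomenon provides $\A \subseteq \U$ automatically, so the task is to establish the reverse inclusion. My plan is to proceed in two steps: first, show that every element of $\U$ lies in each of the cluster localizations $\A_i$; second, use the defining property of a cover via a conductor-ideal argument to pull each such element back into $\A$.

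For the first step, I would argue that $\U \subseteq \U^{\d}$ whenever $\A^{\d}$ is a freezing of $\A$ at a single cluster variable $x_n$, and then iterate to handle a general cluster localization. A seed of $\A^{\d}$ is precisely a seed of $\A$ reachable without mutating in direction $n$: a short verification of the exchange rules (using the twisted coefficients $y_k^{\d} = y_k x_n^{B_{nk}}$ and the extended semifield $\bbP^{\d} = \bbP \times \bbZ$) shows that $\A^{\d}$-mutation in any direction $k \neq n$ reproduces the $\A$-mutation in that direction. Moreover, the Laurent ring $\bbA^{\d}[x_1^{\pm 1}, \dots, x_{n-1}^{\pm 1}]$ for such a seed equals $\bbA[x_1^{\pm 1}, \dots, x_n^{\pm 1}]$, the Laurent ring of the matching seed of $\A$. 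Hence any $u \in \U$ lies in every Laurent ring appearing in the definition of $\U^{\d}$, giving $\U \subseteq \U^{\d}$. Iterating this and invoking the hypothesis $\U_i = \A_i$ yields $\U \subseteq \A_i$ for every $i \in I$.

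For the second step, given $u \in \U$ I would study the conductor-type ideal $I_u := \{a \in \A : au \in \A\}$ of $\A$ and aim to show $I_u = \A$. If $I_u$ were proper, it would sit inside some maximal (hence prime) ideal $P \subseteq \A$. The cover condition then produces an index $i$ with $\A_i P \subsetneq \A_i$. Writing $\A_i = \A[s^{-1}]$ with $s = x_{i_1}x_{i_2}\cdots x_{i_m}$, the containment $u \in \A_i$ from Step~1 gives $u = a/s^N$ for some $a \in \A$ and integer $N \geq 0$, so $s^N \in I_u \subseteq P$; primality of $P$ forces $s \in P$. But $s$ is a unit in $\A_i$, so this would put $1 \in \A_i P$, contradicting $\A_i P \subsetneq \A_i$.

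The step I expect to be most delicate is Step~1: one must verify that the ground-ring enlargement from $\bbA$ to $\bbA^{\d}$ and the restriction to seeds that do not mutate $x_n$ combine so that every Laurent ring defining $\U^{\d}$ is literally one of the Laurent rings defining $\U$. Once that compatibility is in hand, the conductor argument in Step~2 is a routine localization exercise.
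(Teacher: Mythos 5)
Your proof is correct and is essentially the argument of Muller that the paper cites for this lemma without reproducing: the inclusion $\U \subseteq \U^{\d}$ via matching the exchange relations of the frozen seed with those of the original seed (so that each Laurent ring $\bbA^{\d}[x_1^{\pm 1},\dots,x_{n-1}^{\pm 1}]$ defining $\U^{\d}$ equals a Laurent ring defining $\U$), followed by the conductor-ideal argument using the cover condition, is exactly how the result is established in \cite{MullerSIGMA}, and your verification carries over to general ground rings just as the paper asserts.
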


\begin{proposition}[{\cite[Proposition 3]{MullerSIGMA}}]
Let $\A$ be an isolated cluster algebra. Then $\A = \U$.
\label{prop:isolated}
\end{proposition}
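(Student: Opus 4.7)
The plan is to exploit the triviality of mutation in the isolated case to enumerate all clusters explicitly, then extract a joint divisibility condition that forces any element of $\U$ to lie in $\A$.

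First I would verify that, when $B = 0$, the exchange matrix mutation preserves the zero matrix, the coefficient rule leaves $y_j$ unchanged for $j \neq k$ and sends $y_k \mapsto y_k^{-1}$, and the cluster rule simply replaces $x_k$ by $x_k' = (y_k + 1)/((y_k \oplus 1) x_k)$. Setting $\pi_k := (y_k + 1)/(y_k \oplus 1)$, condition~(\ref{conditionCoeff}) gives $\pi_k = y_k/(1 \oplus y_k) + 1/(1 \oplus y_k) \in \bbA$, and the defining relation is $x_k x_k' = \pi_k$. Because mutations at distinct indices commute, the full set of clusters is $\{(x_1^{(\epsilon_1)}, \ldots, x_n^{(\epsilon_n)}) : \epsilon \in \{0,1\}^n\}$ with $x_k^{(0)} := x_k$, $x_k^{(1)} := x_k'$, and consequently $\A = \bbA[x_1, x_1', \ldots, x_n, x_n']$.

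Next, given $z \in \U$, I would expand $z = \sum_\alpha c_\alpha x^\alpha$ uniquely as a Laurent polynomial in the initial cluster, with $c_\alpha \in \bbA$ and only finitely many nonzero. For each such $\alpha$ let $S(\alpha) := \{i : \alpha_i < 0\}$ and consider the single cluster obtained by mutating at precisely the indices in $S(\alpha)$. Membership of $z$ in that cluster's Laurent ring with $\bbA$-coefficients, together with the substitution $x_i = \pi_i/x_i'$ for $i \in S(\alpha)$ and the algebraic independence of $x_1, \ldots, x_n$ used to match the coefficient of $x^\alpha$, forces the divisibility $c_\alpha \in \bigl(\prod_{i \in S(\alpha)} \pi_i^{|\alpha_i|}\bigr)\bbA$. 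Using $\pi_i^{|\alpha_i|} x_i^{-|\alpha_i|} = (x_i')^{|\alpha_i|}$, this rewrites $c_\alpha x^\alpha$ as an $\bbA$-scalar times a monomial in the $x_j$ (for $j \notin S(\alpha)$) and the $x_i'$ (for $i \in S(\alpha)$), which lies in $\A$. Summing over $\alpha$ gives $z \in \A$, and the reverse inclusion $\A \subseteq \U$ is the Laurent phenomenon.

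The main obstacle is extracting the joint divisibility $\prod_{i \in S(\alpha)} \pi_i^{|\alpha_i|} \mid c_\alpha$ in one stroke. Applying the intersection condition one mutation at a time only delivers the separate divisibilities $\pi_i^{|\alpha_i|} \mid c_\alpha$, and in a general integral domain $\bbA$ these need not combine to divisibility by the product. Choosing the single multi-mutation mask $S(\alpha)$ that flips exactly the negative coordinates of $\alpha$ is the key trick: the resulting expansion places the entire product $\prod_{i \in S(\alpha)} \pi_i^{|\alpha_i|}$ in the denominator simultaneously, so one application of the intersection hypothesis delivers the full divisibility.
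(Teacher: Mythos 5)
Your argument is correct and is essentially the proof the paper imports from Muller's Proposition 3 (the paper gives no proof of its own, only the remark that Muller's goes through for general ground rings satisfying (\ref{conditionLP}) and (\ref{conditionCoeff})): enumerate the $2^n$ clusters, and for each Laurent monomial use the cluster obtained by mutating exactly at its negative exponents to extract the joint divisibility of $c_\alpha$ by $\prod_{i \in S(\alpha)} \pi_i^{|\alpha_i|}$. Your closing observation --- that applying adjacent clusters one at a time yields only separate divisibilities, which need not combine in a general integral domain $\bbA$ --- correctly identifies why the multi-mutation cluster is the essential device.
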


This immediately gives the following result.

\begin{theorem}
If $\A$ is a locally isolated cluster algebra, then $\A = \U$.
\label{thm:locallyisolated}
\end{theorem}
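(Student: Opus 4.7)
The proof plan is essentially a direct composition of the two preceding results, since the theorem is structured as an immediate corollary of Lemma~\ref{lem:Ucover} and Proposition~\ref{prop:isolated}.

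First, I would unpack the hypothesis. By the definition of locally isolated given just above the theorem statement, assuming $\A$ is locally isolated means exactly that there exists a cover $\{\A_i\}_{i \in I}$ of $\A$ such that each $\A_i$ is an isolated cluster algebra. The key point here is simply to name this cover so it can be referenced.

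Next, I would apply Proposition~\ref{prop:isolated} to each member of the cover. Since every $\A_i$ is isolated, the proposition gives $\A_i = \U_i$ for each $i \in I$. Note that the proposition is stated for any ground ring $\bbA$ satisfying~(\ref{conditionLP}) and~(\ref{conditionCoeff}), so there is no ground ring issue when passing to the freezings that produce the cover.

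Finally, I would invoke Lemma~\ref{lem:Ucover}, whose hypothesis is precisely that $\{\A_i\}_{i \in I}$ is a cover of $\A$ with $\A_i = \U_i$ for every $i$. The conclusion $\A = \U$ is then immediate. There is no real obstacle to overcome: this theorem is the packaging of Lemma~\ref{lem:Ucover} and Proposition~\ref{prop:isolated} into a single clean statement, and the entire proof should fit in two or three sentences. The only thing one might want to emphasize is that both ingredients were already verified to hold over general ground rings $\bbA$ in the surrounding discussion, so the conclusion genuinely applies in the broader setting the paper is developing.
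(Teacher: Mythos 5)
Your proposal is correct and matches the paper exactly: the paper gives no separate proof, stating only that the result follows immediately from Lemma~\ref{lem:Ucover} and Proposition~\ref{prop:isolated}, which is precisely the two-step composition you describe.
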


The definition of a locally isolated cluster algebra and Theorem~\ref{thm:locallyisolated} are not explicitly stated in~\cite{MullerSIGMA}.
This is because over the ground ring $\bbZ\bbP$ being locally isolated is equivalent to being locally acyclic as every acyclic cluster algebra over $\bbZ\bbP$ admits a cover by isolated cluster algebras~\cite[Proposition 4]{MullerSIGMA}.
The equivalence is not true over other ground rings.

In Section~\ref{s:Poisson} we give an example of a cluster algebra of geometric type which is locally acyclic over $\bbZ\bbP$, but for which the cluster algebra and upper cluster algebra do not coincide over a different natural choice of ground ring.
We show this example cluster algebra is locally acyclic by using Muller's Banff algorithm~\cite[Theorem 5.5]{MullerAdv}.
A pair of vertices $(i_1, i_2)$ in a quiver $Q$ is called a \emph{covering pair} if $(i_1, i_2)$ is an arrow of $Q$ that is not contained in any bi-infinite path of mutable vertices.
The notion of covering pair is needed to run the Banff algorithm.
The (reduced) Banff algorithm can be found as Algorithm~\ref{alg:Banff} in this article.
The reduced version of the algorithm deletes vertices rather then freezes them.
This makes for a simpler check that a cluster algebra is locally acyclic, but has the down side that it does not compute the actual cover.

\begin{algorithm}
\SetKwInOut{Input}{Input}
\SetKwInOut{Output}{Output}
\Input{A (mutable part of a) quiver $Q_0$}
\Output{A finite set $\mathbf{A}$ of acyclic quivers; or else the algorithm has \textbf{fails}.}
$\mathbf{A} \leftarrow \emptyset$, $\mathbf{B} \leftarrow \{Q_0\}$\;
\While{$\mathbf{B} \neq \emptyset$}{
Remove a quiver $Q$ from $\mathbf{B}$\;
\uIf{$Q$ is mutation equivalent to an acyclic quiver}{
$\mathbf{A} \leftarrow \mathbf{A} \cup \{Q\}$\;}
\uElseIf{$Q$ is mutation equivalent to a quiver $Q'$ with a covering pair $(i_1, i_2)$}{
Let $Q_j$ be the quiver obtained from $Q'$ by freezing (deleting) $i_j$ for $j = 1,2$\;
$\mathbf{B} \leftarrow \mathbf{B} \cup \{Q_1, Q_2\}$\;}
\uElse{The algorithm \textbf{fails}.}
}
\caption{The (reduced) Banff algorithm to determine if a quiver is locally acyclic.}
\label{alg:Banff}
\end{algorithm}

In the remainder of this section we analyze the proof of~\cite[Proposition 4]{MullerSIGMA} and consider conditions which imply a cluster algebra is locally isolated.
This allows the application of Theorem~\ref{thm:locallyisolated} to conclude the cluster algebra is equal to its upper cluster algebra.
An index $i \in \{1,2,\dots, n\}$ is a \emph{source} in the seed $(B, \x, \y)$ if $B_{ki} \geq 0$ for all $1 \leq k \leq n$.
In this case mutation in the direction $i$ gives
\begin{equation} x_ix'_i = \frac{y_i}{y_i \oplus 1}\prod_{B_{ki} > 0} x_k^{B_{ki}} + \frac{1}{y_i \oplus 1}.
\label{eq:sourcemut}
\end{equation}
A key step in showing an acyclic cluster algebra over $\bbZ\bbP$ is covered by isolated cluster algebras is as follows\footnote{The argument that follows is the ``source version'' and a corresponding ``sink version'' holds with corresponding modification. The ``sink version'' is used in~\cite{MullerSIGMA}.}.
Let $i$ be a non-isolated source choose $j$ with $B_{ji} > 0$, then after multiplying by $y_i \oplus 1$ and rearranging~(\ref{eq:sourcemut}) we obtain
\begin{equation}
((y_i \oplus 1) x'_i)x_i - \left(y_i \prod_{\substack{B_{ki} > 0\\k \neq j}} x_k^{B_{ki}}\right)x_j^{B_{ji}} = 1.
\label{eq:sourcecover}
\end{equation}
It is implied by~(\ref{eq:sourcecover}) that $1 \in \A x_i + \A x_j$ and it follows that $\{\A [x_i^{-1}], \A [x_j^{-1}]\}$ cover $\A$ provided these are cluster localizations.
Here we see that we do not necessarily need to be working over $\bbZ\bbP$.
The essential fact is that $1 \in \A x_i + \A x_j$.
This leads to the following definition.
Call the seed $(B,\x,\y)$ a \emph{source freezing seed} with respect to a ground ring $\bbA$ if $y_i \oplus 1\in \bbA$ for all $1 \leq i \leq n$.

\begin{lemma}
Let $(B,\x,\y)$ be a source freezing seed with respect to $\bbA$.
If $i$ is a source and $B_{ji} > 0$, then $\{\A[x_i^{-1}], \A[x_j^{-1}]\}$ cover $\A$ provided they are cluster localizations.
\label{lem:cover}
\end{lemma}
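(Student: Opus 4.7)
The plan is to reduce the covering condition to the ideal-theoretic identity $1 \in \A x_i + \A x_j$ and then read this identity off from the source exchange relation already displayed as (\ref{eq:sourcecover}).

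First I would unpack what it means for $\{\A[x_i^{-1}], \A[x_j^{-1}]\}$ to be a cover. By definition, for every prime ideal $P \subseteq \A$ I must exhibit at least one of $\A[x_i^{-1}]P$ or $\A[x_j^{-1}]P$ as a proper ideal. For a localization at a single element the extension of a prime is proper precisely when that element is not in the prime; so the cover condition is equivalent to the assertion that no prime of $\A$ contains both $x_i$ and $x_j$, and in turn this is equivalent to $\A x_i + \A x_j = \A$.

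Next I would use equation (\ref{eq:sourcecover}) to produce $1$ explicitly as an $\A$-linear combination of $x_i$ and $x_j^{B_{ji}}$. The displayed identity reads
\[((y_i \oplus 1) x'_i)\, x_i - \Bigl(y_i \prod_{\substack{B_{ki} > 0\\ k \neq j}} x_k^{B_{ki}}\Bigr)\, x_j^{B_{ji}} = 1,\]
so to conclude $1 \in \A x_i + \A x_j$ I only need the two parenthesized coefficients to lie in $\A$. The cluster variables $x'_i$ and $x_k$ are in $\A$ by definition. The hypothesis that $(B,\x,\y)$ is a source freezing seed with respect to $\bbA$ gives $y_i \oplus 1 \in \bbA \subseteq \A$, while the standing condition (\ref{conditionCoeff}) on the ground ring provides $y_i/(y_i \oplus 1) \in \bbA$; multiplying these two yields $y_i \in \bbA \subseteq \A$. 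Since $B_{ji} > 0$, the element $x_j^{B_{ji}}$ lies in $\A x_j$, so the identity witnesses $1 \in \A x_i + \A x_j$ as desired.

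I do not anticipate a real obstacle: the algebraic content was already packaged into (\ref{eq:sourcecover}) in the discussion preceding the lemma. The only point that could fail without the source freezing hypothesis is the membership of $y_i$ and $y_i \oplus 1$ in $\A$, which is automatic over the ground ring $\bbZ\bbP$ but genuinely uses the hypothesis for a general $\bbA$. That membership is precisely what the new definition is engineered to supply, and once it is in hand the cover property is a one-line consequence of prime avoidance applied to the Bezout-type identity above.
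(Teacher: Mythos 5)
Your proposal is correct and follows the same route as the paper: derive $1 \in \A x_i + \A x_j$ from (\ref{eq:sourcecover}) and then conclude by noting that a proper prime ideal cannot contain both $x_i$ and $x_j$. The only difference is that you spell out why the coefficients in (\ref{eq:sourcecover}) lie in $\A$ (in particular deducing $y_i \in \bbA$ from (\ref{conditionCoeff}) together with the source freezing hypothesis), a verification the paper leaves implicit.
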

\begin{proof}
In the case $(B,\x,\y)$ a \emph{source freezing seed} with respect to $\bbA$, $i$ is a source, and $B_{ji} > 0$ we have that $1 \in \A x_i + \A x_j$ by (\ref{eq:sourcecover}).
Given any prime $\A$-ideal $P$ we must have either $x_i \not\in P$ or $x_j \not\in P$ since $P \neq \A$ is a proper ideal.
If $x_i \not\in P$, then $\A[x_i^{-1}]P \subsetneq \A[x_i^{-1}]$.
If $x_j \not\in P$, then $\A[x_j^{-1}]P \subsetneq \A[x_j^{-1}]$.
Thus if $\A[x_i^{-1}]$ and $\A[x_j^{-1}]$ are cluster localizations they form a cover.
\end{proof}

\begin{lemma}
Let $(B,\x,\y)$ be a source freezing seed, and let $(B^{\d}, \x^{\d}, \y^{\d})$ the freezing at $x_i$ for any $1 \leq i \leq n$.
The seed $(B^{\d}, \x^{\d}, \y^{\d})$ is a source freezing seed with respect to $\A^{\d} = \A[x_i^{\pm 1}]$.
\label{lem:seed}
\end{lemma}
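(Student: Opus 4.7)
The plan is to directly unpack the definition of freezing from Section~\ref{s:locallyacyclic} and check the source freezing condition by a one-line calculation in the enlarged semifield $\bbP^{\d}$. After reindexing so that the freezing occurs at $x_n$ (which is permissible by the remark in the preliminaries that any $x_i$ may be frozen by permuting indices), the new mutable coefficients are $y_j^{\d} = y_j x_n^{B_{nj}}$ for $1 \leq j \leq n-1$, and the extended auxiliary addition on $\bbP^{\d} = \bbP \times \bbZ$ is
\[ (p_1 x_n^a) \oplus (p_2 x_n^b) = (p_1 \oplus p_2)\, x_n^{\min(a,b)}. \]

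Writing $1 \in \bbP^{\d}$ as $1 \cdot x_n^0$ and applying this formula to $y_j^{\d} \oplus 1$ yields
\[ y_j^{\d} \oplus 1 = (y_j \oplus 1)\, x_n^{\min(B_{nj},0)}. \]
The factor $y_j \oplus 1$ lies in $\bbA$ by the hypothesis that $(B,\x,\y)$ is a source freezing seed with respect to $\bbA$, while $x_n^{\min(B_{nj},0)}$ lies in $\bbA[x_n^{\pm 1}] = \bbA^{\d}$ since the localization at $x_n$ has been built into the new ground ring. Hence $y_j^{\d} \oplus 1 \in \bbA^{\d}$ for every $1 \leq j \leq n-1$, which is precisely the source freezing condition for $(B^{\d}, \x^{\d}, \y^{\d})$ with respect to $\bbA^{\d}$.

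There is no substantive obstacle; the proof is essentially an exercise in matching the two definitions. The only points requiring a little care are using the correct extended auxiliary addition on $\bbP^{\d}$, and noting that the exponent $\min(B_{nj},0)$ can be negative, so one genuinely needs $x_n^{-1} \in \bbA^{\d}$ rather than merely $x_n \in \bbA^{\d}$; this is exactly why the definition of freezing adjoins $x_n^{\pm 1}$ to the ground ring rather than just $x_n$.
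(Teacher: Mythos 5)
Your proof is correct and is essentially identical to the paper's own argument: both reduce the claim to the single computation $1 \oplus y_j^{\d} = (1 \oplus y_j)\,x^{\min(0,B)} \in \bbA[x^{\pm 1}]$ using the extended auxiliary addition on $\bbP^{\d}$. Your added remark about why the inverse of the frozen variable is genuinely needed is a fair observation but does not change the substance.
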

\begin{proof}
Since $(B,\x,\y)$ is a source freezing seed, $1 \oplus y_j \in \bbA$ for each $1 \leq j \leq n$.
We must show that  $1 \oplus y^{\d}_j \in \bbA^{\d}$ for each $1 \leq j \leq n$ where $y^{\d}_j = y_j x_i^{B_{ij}}$.
We compute
\begin{align*}
1 \oplus y^{\d}_j &= 1 \oplus y_j x_i^{B_{ij}}\\
&= (1 \oplus y_j)x_i^{\min(0,B_{ij})} \in \bbA^{\d} = \bbA[x_i^{\pm 1}],
\end{align*}
and the lemma is proven.
\end{proof}

\begin{theorem}
If $\A = \A_{\bbA}(B,\x,\y)$ where $(B,\x,\y)$ is an acyclic source freezing seed with respect to $\bbA$, then $\A = \U$.
\label{thm:A=U}
\end{theorem}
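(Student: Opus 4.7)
The plan is to induct on the rank $n$, producing for each $\A$ a cover $\{\A[x_i^{-1}], \A[x_j^{-1}]\}$ by cluster localizations each equal to its own upper cluster algebra, and then invoking Lemma~\ref{lem:Ucover}. This mirrors the strategy used in~\cite{MullerSIGMA} to show acyclic cluster algebras over $\bbZ\bbP$ are locally acyclic, but the generalization needs to be carried out in a way that keeps the source freezing hypothesis intact after each freezing step.

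The base case is the isolated situation: when the mutable part of the quiver carries no arrows (which subsumes $n=0$), Proposition~\ref{prop:isolated} directly gives $\A = \U$. For the inductive step, assume $n \geq 1$ and that the mutable quiver has at least one arrow. Acyclicity guarantees the existence of a source $i$ that is not isolated, so I can pick $j$ with $B_{ji} > 0$. I would then examine the two freezings $\A^{\dagger}_i$ and $\A^{\dagger}_j$ of $\A$ at $x_i$ and at $x_j$ respectively. Each is a rank $(n-1)$ cluster algebra whose exchange matrix is obtained from $B$ by deleting one row and column, so acyclicity is automatically inherited. Lemma~\ref{lem:seed} shows that each freezing is a source freezing seed with respect to its enlarged ground ring $\bbA[x_i^{\pm 1}]$ or $\bbA[x_j^{\pm 1}]$. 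The inductive hypothesis therefore applies to both freezings, giving $\A^{\dagger}_i = \U^{\dagger}_i$ and $\A^{\dagger}_j = \U^{\dagger}_j$, and then Lemma~\ref{lem:freezelocal} upgrades these equalities to the statement that $\A[x_i^{-1}]$ and $\A[x_j^{-1}]$ are cluster localizations.

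Once the cluster localizations are available, Lemma~\ref{lem:cover} (applied to the non-isolated source $i$ and the index $j$ with $B_{ji} > 0$) certifies that $\{\A[x_i^{-1}], \A[x_j^{-1}]\}$ covers $\A$, and since each member of the cover equals its own upper cluster algebra Lemma~\ref{lem:Ucover} concludes $\A = \U$. The only point requiring real care is that both preservation properties—source freezing under freezing, and acyclicity under deleting a vertex—hold simultaneously, so that the inductive hypothesis can be invoked on the freezings. Source freezing is exactly the content of Lemma~\ref{lem:seed}, and acyclicity preservation is immediate from the description of $B^{\dagger}$; thus the expected obstacle is not a genuine technical one but the bookkeeping of invoking the earlier lemmas in the correct order.
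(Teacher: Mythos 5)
Your proof is correct and follows essentially the same route as the paper's: induction on rank with the isolated case as the base, freezing at a non-isolated source $i$ and a neighbor $j$, and then chaining Lemma~\ref{lem:seed}, the inductive hypothesis, Lemma~\ref{lem:freezelocal}, Lemma~\ref{lem:cover}, and Lemma~\ref{lem:Ucover} in exactly the order the paper does.
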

\begin{proof}
Assume $(B, \x, \y)$ is an acyclic source freezing on rank $n$.
We will induct on the rank.
If $n \leq 1$, we are done by Proposition~\ref{prop:isolated} since $B$ must be isolated.
Now we may assume the seed in non-isolated, otherwise we are done by Proposition~\ref{prop:isolated}.
Since the seed in non-isolated and acyclic there must be a non-isolated source.
We choose a non-isolated source $i$ and then pick $j$ with $B_{ji} > 0$.
Freezing at $x_i$, the seed $(B^{\d}, \x^{\d}, \y^{\d})$ is an acyclic source freezing seed with respect to $\bbA^{\d}$ by Lemma~\ref{lem:seed}, and hence $\A^{\d} = \U^{\d}$ by induction since $\A^{\d}$ is of rank $n-1$.
Similarly freezing at $x_j$, the seed  $(B^{\d\d}, \x^{\d\d}, \y^{\d\d})$ is an acyclic source freezing seed with respect to $\bbA^{\d\d}$ by Lemma~\ref{lem:seed} and $\A^{\d\d} = \U^{\d\d}$ also.
Now $\A^{\d}$ and $\A^{\d\d}$ both must be cluster localizations by Lemma~\ref{lem:freezelocal}.
Thus Lemma~\ref{lem:cover} says that $\{\A^{\d}, \A^{\d\d}\}$ is a cover of $\A$.
We conclude $\A = \U$ using Lemma~\ref{lem:Ucover}.
\end{proof}

Theorem~\ref{thm:A=U} can be used to produce example of geometric type with $\A = \U$ over the polynomial ground ring $\bbZ\bbP_+$.
Call a quiver a \emph{source freezing quiver} if all arrows involving frozen vertices are directed from a mutable vertex to a frozen vertex.
An acyclic source freezing quiver is pictured in Figure~\ref{fig:asfquiver}.
Combining Theorem~\ref{thm:A=U} with the sharpening of the Laurent phenomenon~\cite[Theorem 3.3.6]{FWZ} for cluster algebras of geometric type we obtain the following corollary since any source freezing quiver defines a source freezing seed with respect to $\bbZ\bbP_+$.

\begin{corollary}
If $Q$ is an acyclic source freezing quiver defining a cluster algebra $\A$, then $\A = \U$ over the ground ring $\bbZ\bbP_+$.
\label{cor:A=Ugeo}
\end{corollary}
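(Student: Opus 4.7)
The strategy is a direct reduction to Theorem~\ref{thm:A=U} applied with $\bbA = \bbZ\bbP_+$. This requires checking two things: (i) the standing assumptions (\ref{conditionLP}) and (\ref{conditionCoeff}) of the section hold for the ground ring $\bbZ\bbP_+$, so that the cluster algebra and the terminology of source freezing seeds are defined over $\bbZ\bbP_+$; and (ii) the seed $(B,\x,\y)$ defined by $Q$ is an acyclic source freezing seed with respect to $\bbZ\bbP_+$.

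For (i), condition (\ref{conditionLP}) is exactly the sharpened Laurent phenomenon of \cite[Theorem 3.3.6]{FWZ} recalled earlier, which guarantees $\A \subseteq \bbZ\bbP_+[x_1^{\pm 1}, \ldots, x_n^{\pm 1}]$ and the analogous containment for every mutation equivalent seed. Condition (\ref{conditionCoeff}) is then a short tropical computation: writing $y_i = \prod_j z_j^{a_{ij}}$ for an arbitrary seed, the tropical auxiliary addition gives $1 \oplus y_i = \prod_j z_j^{\min(0, a_{ij})}$, and hence
\[
\frac{y_i}{1 \oplus y_i} = \prod_j z_j^{\max(0, a_{ij})}, \qquad \frac{1}{1 \oplus y_i} = \prod_j z_j^{\max(0, -a_{ij})},
\]
both of which are monomials with nonnegative exponents and so lie in $\bbZ\bbP_+$, regardless of the signs of the $a_{ij}$.

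For (ii), acyclicity of the seed is precisely acyclicity of the mutable subquiver of $Q$, which is hypothesized. The source freezing quiver condition says every arrow between a mutable and a frozen vertex is oriented from the mutable vertex to the frozen one, so the exponents $a_{ij}$ in $y_i = \prod_j z_j^{a_{ij}}$ are all nonnegative for the initial seed; the tropical identity above then collapses to $1 \oplus y_i = 1 \in \bbZ\bbP_+$ for every mutable index $i$, which is the defining source freezing seed condition. With (i) and (ii) in hand, Theorem~\ref{thm:A=U} applies directly and yields $\A = \U$. All of the genuine content—the induction on rank via freezings mediated by Lemmas~\ref{lem:cover} and~\ref{lem:seed}—has been absorbed into Theorem~\ref{thm:A=U}, so there is no real obstacle here; the only task is the translation between the quiver-theoretic source freezing property and the semifield form of the $y$-coefficients, which the tropical formula makes transparent.
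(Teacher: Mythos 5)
Your proof is correct and follows essentially the same route as the paper, which simply combines Theorem~\ref{thm:A=U} with the sharpened Laurent phenomenon after noting that a source freezing quiver yields a source freezing seed over $\bbZ\bbP_+$. Your explicit tropical computations verifying conditions (\ref{conditionLP}) and (\ref{conditionCoeff}) and the identity $1 \oplus y_i = 1$ for the initial seed are exactly the details the paper leaves implicit.
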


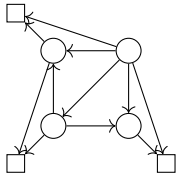
\begin{figure}
\begin{tikzpicture}
\node at (1,-1) [rectangle,draw] (2') {};
\node at (-1,-1) [rectangle,draw] (3') {};
\node at (-1,1) [rectangle,draw] (4') {};

\node at (0.5,0.5) [circle,draw] (1) {};
\node at (0.5,-0.5) [circle,draw] (2) {};
\node at (-0.5,-0.5) [circle,draw] (3) {};
\node at (-0.5,0.5) [circle,draw] (4) {};

\draw[->] (1) to (2');
\draw[->] (2) to (2');
\draw[->] (3) to (3');
\draw[->] (4) to (3');
\draw[->] (4) to (4');
\draw[->] (1) to (4');

\draw[->] (1) to (2);
\draw[->] (3) to (2);
\draw[->] (1) to (4);
\draw[->] (3) to (4);
\draw[->] (1) to (3);
\end{tikzpicture}

\caption{An acyclic source freezing quiver.}
\label{fig:asfquiver}
\end{figure}

\section{A Poisson-Lie example}
\label{s:Poisson}

Gekhtman, Shapiro, and Vainshtein have developed a program for studying cluster algebras through Poisson structures compatible with cluster mutation~\cite{GSV-book}.
In this program a cluster algebra $\A$ produces a \emph{log-canonical} coordinate system on the so called \emph{cluster manifold} which is a certain nonsingular part of $\Spec(\A)$.
Log-canonical coordinates are particularly nice coordinates which can be thought of as an algebraic analog of \emph{Darboux coordinates}~\cite{log-can}.

A cluster structure in the field of rational functions of an algebraic variety is called \emph{regular} if cluster variables are regular functions.
A main question is to construct explicitly a compatible regular cluster structure corresponding to a given variety equipped with an algebraic Poisson structure.
For a simple complex Lie group, Belavin and Drinfled have given a classification of Poisson-Lie structures coming from classical $R$-matrices~\cite{BD}.
The main conjecture of Gekhtman, Shapiro, and Vainshtein on cluster algebras and Poisson geometry states that for complex simple Lie groups the classification of regular cluster structures parallels the Belavin-Drinfled classification~\cite[Conjecture 3.2]{GSV-BD}. 

The cluster structure on double Bruhat cells from~\cite{BFZ} is known to correspond to the \emph{standard} Poisson-Lie structure~\cite[Chapter 4.3]{GSV-book}.
Other Poisson-Lie compatible cluster structures are called \emph{exotic}.
In certain cases Eisner has verified the conjecture and produced exotic cluster structures~\cite{E1,E2,E3}.
We will be particularly interested in an exotic cluster structure constructed by Gekhtman, Shapiro, and Vainshtein known as the \emph{Cremmer-Gervais} cluster structure on $SL_n$~\cite{GSV-CGshort, GSV-CGlong}.
This cluster structure extends to a cluster structure on the set of $n \times n$ matrices denoted $\Mat_n$.
We let $\A_{\bbA}(CG,n)$ and $\U_{\bbA}(CG, n)$ denote the cluster algebra and upper cluster algebra over the ground ring $\bbA$ coming from the Cremmer-Gervais cluster structure on $\Mat_n$.
The quiver $Q(CG,n)$ defines this cluster algebra.
Figure~\ref{fig:CG3} shows the quiver $Q(CG,3)$.
It is known that for any $n$ the upper cluster algebra $\U_{\bbZ\bbP_+}(CG,n)$ is naturally isomorphic to the ring of regular functions on $\Mat_n$~\cite[Theorem 3.1]{GSV-CGlong}.
The following result shows the sensitivity of the $\A = \U$ question on the ground ring

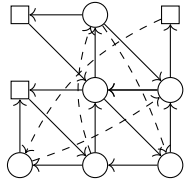
\begin{figure}
\begin{tikzpicture}
\node at (1,3) [rectangle, draw] (13) {};
\node at (2,3) [circle, draw] (23) {};
\node at (3,3) [rectangle, draw] (33) {};
\node at (1,2) [rectangle, draw] (12) {};
\node at (2,2) [circle, draw] (22) {};
\node at (3,2) [circle, draw] (32) {};
\node at (1,1) [circle, draw] (11) {};
\node at (2,1) [circle, draw] (21) {};
\node at (3,1) [circle, draw] (31) {};

\draw[->] (11) -- (12);
\draw[->] (12) -- (21);
\draw[->] (21) -- (11);
\draw[->] (21) -- (22);
\draw[->] (22) -- (12);
\draw[->] (32) -- (22);
\draw[->] (22) -- (31);
\draw[->] (31) -- (32);
\draw[->] (31) -- (21);
\draw[->] (22) -- (23);
\draw[->] (23) -- (32);
\draw[->] (32) -- (22);
\draw[->] (23) -- (13);
\draw[->] (13) -- (22);
\draw[->] (32) -- (33);

\draw[->, dashed] (11) [bend right=8]to (32);
\draw[->, dashed] (21) [bend left=20]to (23);
\draw[->, dashed] (23) [bend left=8]to (31);
\draw[->, dashed] (33) [bend right=20]to (11);

\end{tikzpicture}
\caption{The quiver $Q(CG,3)$.}
\label{fig:CG3}
\end{figure}

\begin{proposition}
We have equality $\A_{\bbZ\bbP}(CG,3) = \U_{\bbZ\bbP}(CG,3)$ over $\bbZ\bbP$ but strict containment $\A_{\bbZ\bbP_+}(CG,3) \subsetneq \U_{\bbZ\bbP_+}(CG,3)$ over $\bbZ\bbP_+$.
\label{prop:CG}
\end{proposition}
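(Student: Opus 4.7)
The proposition makes two assertions that I would tackle by different means.

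For the equality $\A_{\bbZ\bbP}(CG,3) = \U_{\bbZ\bbP}(CG,3)$, the strategy is to show that the cluster algebra is locally acyclic and then invoke Muller's theorem~\cite{MullerSIGMA} that locally acyclic cluster algebras over $\bbZ\bbP$ satisfy $\A = \U$. Concretely, I would run the reduced Banff algorithm of Algorithm~\ref{alg:Banff} on the mutable subquiver of $Q(CG,3)$ from Figure~\ref{fig:CG3}. Since $Q(CG,3)$ is not itself mutation equivalent to an acyclic quiver, one first locates a covering pair---an arrow between mutable vertices not contained in any bi-infinite directed path of mutable vertices---possibly after performing mutations, and then recurses on the two smaller quivers obtained by deleting each endpoint. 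With only six mutable vertices the recursion tree is finite, and each leaf must be checked to be mutation equivalent to an acyclic quiver.

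For the strict containment $\A_{\bbZ\bbP_+}(CG,3) \subsetneq \U_{\bbZ\bbP_+}(CG,3)$, the plan is to quote the existing Gekhtman--Shapiro--Vainshtein analysis of the Cremmer-Gervais structure. By \cite[Theorem 3.1]{GSV-CGlong}, the upper cluster algebra $\U_{\bbZ\bbP_+}(CG,3)$ is naturally isomorphic to the coordinate ring $\mathcal{O}(\Mat_3)$. On the other hand, \cite{GSV-CGshort, GSV-CGlong} exhibit explicit regular functions on $\Mat_3$ that cannot be written as polynomials over $\bbZ\bbP_+$ in the cluster variables, witnessing $\A_{\bbZ\bbP_+}(CG,3) \subsetneq \mathcal{O}(\Mat_3)$ and thus the strict containment.

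The main obstacle is the Banff algorithm run for the first half: each branch must be pursued until it either succeeds in reaching an acyclic quiver or requires identifying a further covering pair, and the long-range ``dashed'' arrows in $Q(CG,3)$ make the bi-infinite path analysis needed to certify covering pairs delicate. Conceptually, the proposition illustrates that enlarging the ground ring from $\bbZ\bbP_+$ to $\bbZ\bbP$ effectively inverts the frozen variables, which is precisely the freedom that Muller's cover by acyclic localizations exploits and which is unavailable in the polynomial setting; so it is not at all surprising that the two sides of the proposition give opposite answers.
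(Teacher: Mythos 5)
Your proposal is correct and follows essentially the same route as the paper: the equality over $\bbZ\bbP$ is established by running the reduced Banff algorithm on the mutable part of $Q(CG,3)$ to certify local acyclicity (the paper records the run in Figure~\ref{fig:Banff}, with covering pairs found after mutation), and the strict containment over $\bbZ\bbP_+$ is quoted from Gekhtman--Shapiro--Vainshtein, specifically \cite[Theorem 4.1]{GSV-CGshort}.
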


\begin{proof}
The equality $\A_{\bbZ\bbP}(CG,3) = \U_{\bbZ\bbP}(CG,3)$ follows from the fact that $\A_{\bbZ\bbP}(CG,3)$ is a locally acyclic cluster algebra over $\bbZ\bbP$.
This can be checked by applying the Banff algorithm.
A visual representation of the reduced Banff algorithm applied to $Q(CG,3)$ is shown in Figure~\ref{fig:Banff}.
In Figure~\ref{fig:Banff} mutation equivalence is denoted by $\Leftrightarrow$ and covering pairs used are displayed in thick red.
The strict containment $\A_{\bbZ\bbP}(CG,3) \subsetneq \U_{\bbZ\bbP}(CG,3)$ is~\cite[Theorem 4.1]{GSV-CGshort}.
\end{proof}

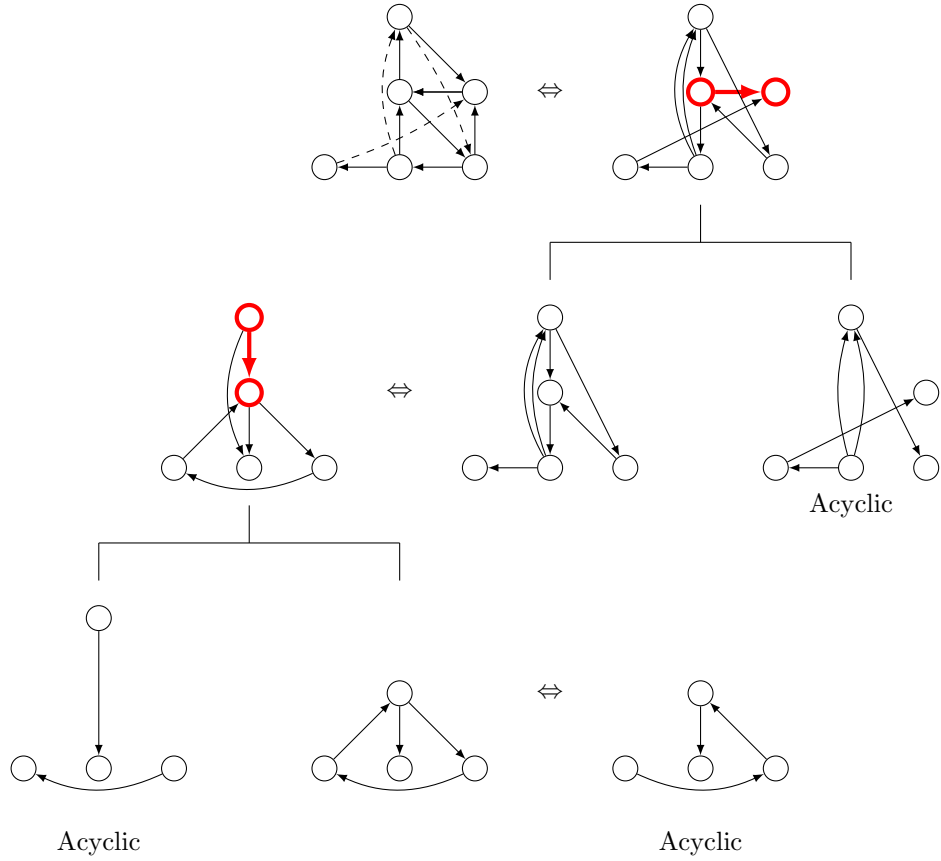
\begin{figure}
\centering
\begin{tikzpicture}
\node at (2,3) [circle, draw] (23) {};
\node at (2,2) [circle, draw] (22) {};
\node at (3,2) [circle, draw] (32) {};
\node at (1,1) [circle, draw] (11) {};
\node at (2,1) [circle, draw] (21) {};
\node at (3,1) [circle, draw] (31) {};

\draw[-{latex}] (21) -- (11);
\draw[-{latex}] (21) -- (22);
\draw[-{latex}] (32) -- (22);
\draw[-{latex}] (31) -- (32);
\draw[-{latex}] (31) -- (21);
\draw[-{latex}] (22) -- (23);
\draw[-{latex}] (23) -- (32);
\draw[-{latex}] (32) -- (22);
\draw[-{latex}] (22) -- (31);

\draw[-{latex}, dashed] (11) [bend right=8]to (32);
\draw[-{latex}, dashed] (21) [bend left=20]to (23);
\draw[-{latex}, dashed] (23) [bend left=8]to (31);

\node at (4,2) (equiv) {$\Leftrightarrow$};

\node at (6,3) [circle, draw] (63) {};
\node at (6,2) [circle, draw, ultra thick, red] (62) {};
\node at (7,2) [circle, draw, ultra thick, red] (72) {};
\node at (5,1) [circle, draw] (51) {};
\node at (6,1) [circle, draw] (61) {};
\node at (7,1) [circle, draw] (71) {};

\draw[-{latex}] (51) -- (72);
\draw[-{latex}] (61) -- (51); 
\draw[-{latex}] (63) -- (62);
\draw[-{latex}] (62) -- (61);
\draw[-{latex}, ultra thick, red] (62) -- (72);
\draw[-{latex}] (71) -- (62);
\draw[-{latex}] (63) -- (71);

\draw[-{latex}] (61) [bend left=20]to (63);
\draw[-{latex}] (61) [bend left=30]to (63);

\draw (6,0.5) -- (6,0);
\draw (4,0) -- (8,0);
\draw (4,0) -- (4,-0.5);
\draw (8,0) -- (8,-0.5);

\node at (8,-1) [circle, draw] (8-1) {};
\node at (9,-2) [circle, draw] (9-2) {};
\node at (7,-3) [circle, draw] (7-3) {};
\node at (8,-3) [circle, draw] (8-3) {};
\node at (9,-3) [circle, draw] (9-3) {};
\node at (8,-3.5) (acyclic) {Acyclic};

\draw[-{latex}] (8-3) -- (7-3);
\draw[-{latex}] (8-3) [bend left=15]to (8-1);
\draw[-{latex}] (8-3) [bend right=15]to (8-1);
\draw[-{latex}] (7-3) -- (9-2);
\draw[-{latex}] (8-1) -- (9-3);

\node at (4,-1) [circle, draw] (4-1) {};
\node at (4,-2) [circle, draw] (4-2) {};
\node at (3,-3) [circle, draw] (3-3) {};
\node at (4,-3) [circle, draw] (4-3) {};
\node at (5,-3) [circle, draw] (5-3) {};

\draw[-{latex}] (4-3) -- (3-3);
\draw[-{latex}] (4-3) [bend left=20]to (4-1);
\draw[-{latex}] (4-3) [bend left=30]to (4-1);
\draw[-{latex}] (4-1) -- (5-3);
\draw[-{latex}] (4-1) -- (4-2);
\draw[-{latex}] (4-2) -- (4-3);
\draw[-{latex}] (5-3) -- (4-2);

\node at (2,-2) (equiv) {$\Leftrightarrow$};

\node at (0,-1) [circle, draw, ultra thick, red] (0-1) {};
\node at (0,-2) [circle, draw, ultra thick, red] (0-2) {};
\node at (-1,-3) [circle, draw] (-1-3) {};
\node at (0,-3) [circle, draw] (0-3) {};
\node at (1,-3) [circle, draw] (1-3) {};

\draw[-{latex},ultra thick,red] (0-1) -- (0-2);
\draw[-{latex}] (0-2) -- (0-3);
\draw[-{latex}] (0-1) [bend right=25]to (0-3);
\draw[-{latex}] (1-3) [bend left=25]to (-1-3);
\draw[-{latex}] (-1-3) -- (0-2);
\draw[-{latex}] (0-2) -- (1-3);

\draw (0,-3.5) -- (0,-4);
\draw (-2,-4) -- (2,-4);
\draw (-2,-4) -- (-2,-4.5);
\draw (2,-4) -- (2,-4.5);

\node at (-2,-5) [circle, draw] (-2-5) {};
\node at (-3,-7) [circle, draw] (-3-7) {};
\node at (-2,-7) [circle, draw] (-2-7) {};
\node at (-1,-7) [circle, draw] (-1-7) {};
\node at (-2,-8) (acyclic) {Acyclic};

\draw[-{latex}] (-2-5) -- (-2-7);
\draw[-{latex}] (-1-7) [bend left=25]to (-3-7);

\node at (2,-6) [circle, draw] (2-6) {};
\node at (1,-7) [circle, draw] (1-7) {};
\node at (2,-7) [circle, draw] (2-7) {};
\node at (3,-7) [circle, draw] (3-7) {};

\draw[-{latex}] (1-7) -- (2-6);
\draw[-{latex}] (2-6) -- (3-7);
\draw[-{latex}] (3-7) [bend left=25]to (1-7);
\draw[-{latex}] (2-6) -- (2-7);

\node at (4,-6) (equiv) {$\Leftrightarrow$};

\node at (6,-6) [circle, draw] (6-6) {};
\node at (5,-7) [circle, draw] (5-7) {};
\node at (6,-7) [circle, draw] (6-7) {};
\node at (7,-7) [circle, draw] (7-7) {};
\node at (6,-8) (Acyclic) {Acyclic};

\draw[-{latex}] (7-7) -- (6-6);
\draw[-{latex}] (5-7) [bend right=25]to (7-7);
\draw[-{latex}] (6-6) -- (6-7);

\end{tikzpicture}
\caption{The reduced Banff algorithm applied to the quiver $Q(CG,3)$.}
\label{fig:Banff}
\end{figure}

\section{Relationship with reddening}
\label{s:reddening}

A maximal green sequence or reddening sequence is a special sequence of mutations whose existence gives rise to additional properties of the underlying cluster algebra. They are mutation sequences which take the framed quiver of the cluster algebra to the coframed quiver of it (in the case of maximal green sequences, the mutations must be green mutations). For a more detailed introduction to these sequences see the work of Brustle, Dupont, and Perotin \cite{BDP}. 

It has been observed in the literature that the existence of a maximal green sequence or reddening sequence seems to coincide with equality of the cluster algebra and upper cluster algebra~\cite{CLS}. The existence of such a sequence depends only on the mutable part of a quiver. 

The quiver $Q(CG,3)$ provides an interesting case regarding the connection between these mutation sequences and the upper cluster algebra.
The mutable part of the quiver $Q(CG,3)$ with vertices labeled is shown in Figure~\ref{fig:MGS}.
With this labeling of vertices it can be checked that $(2,3,4,1,5,1,2,6,3)$ is a maximal green sequence.
Hence, we see the relationship of reddening sequences and equality of the cluster algebra and upper cluster algebra is again sensitive to the choice of ground ring since $\A_{\bbZ\bbP_+}(CG,3) \neq \U_{\bbZ\bbP_+}(CG,3)$ but $\A_{\bbZ\bbP}(CG,3) = \U_{\bbZ\bbP}(CG,3)$.
The maximal green sequence exhibited for $Q(CG,3)$ can be found using a new technique called \emph{component preserving mutations}.
An explanation of the idea of component preserving mutations along with further examples is given in~\cite{CP}.

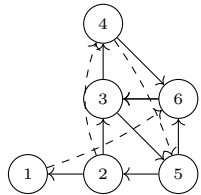
\begin{figure}
\begin{tikzpicture}
\node at (2,3) [circle, draw] (23) {\tiny{$4$}};
\node at (2,2) [circle, draw] (22) {\tiny{$3$}};
\node at (3,2) [circle, draw] (32) {\tiny{$6$}};
\node at (1,1) [circle, draw] (11) {\tiny{$1$}};
\node at (2,1) [circle, draw] (21) {\tiny{$2$}};
\node at (3,1) [circle, draw] (31) {\tiny{$5$}};

\draw[->] (21) -- (11);
\draw[->] (21) -- (22);
\draw[->] (32) -- (22);
\draw[->] (22) -- (31);
\draw[->] (31) -- (32);
\draw[->] (31) -- (21);
\draw[->] (22) -- (23);
\draw[->] (23) -- (32);
\draw[->] (32) -- (22);

\draw[->, dashed] (11) [bend right=8]to (32);
\draw[->, dashed] (21) [bend left=20]to (23);
\draw[->, dashed] (23) [bend left=8]to (31);

\end{tikzpicture}
\caption{The mutable part of the quiver $Q(CG,3)$.}
\label{fig:MGS}
\end{figure}

\section{Conclusion}
\label{sec:conclusion}

In this note we initiate a study of the upper cluster algebras and the dependence on ground rings.
Our main example showing the sensitivity of ground ring choice was the Cremmer-Gervais cluster structure.
In showing that $\A_{\bbZ\bbP_+}(CG,3) \neq \U_{\bbZ\bbP_+}(CG,3)$ Gekhtman, Shapiro, and Vainshtein prove that $x_{12} \in \U_{\bbZ\bbP_+}(CG,3)$ but $x_{12} \not\in \A_{\bbZ\bbP_+}(CG,3)$ where $x_{ij}$ denotes the matrix entry coordinate functions in $\Mat_3$.
However, we know that $x_{12} \in \A_{\bbZ\bbP}(CG,3)$ which leads to the following problem.
\begin{problem}
Find an expression for $x_{12}$ as an element of $\A_{\bbZ\bbP}$.
\end{problem}

Another interesting question regarding the example above is which additional elements of must be added to the ground ring in order to force $\U=\A$. 

\begin{problem}
Find ground rings $\bbA$, with $\bbZ\bbP_+ \subsetneq \bbA \subsetneq \bbZ\bbP$ where $\A_{\bbA}=\U_{\bbA}$.
\label{problem:ring}
\end{problem}
This problem can be attacked by running the Banff algorithm and tracking which coefficients are required to be invertible.
Though this may lead to a ground ring larger than necessary.
Similarly, a version of Problem~\ref{problem:ring} can be asked whenever one has a cluster algebra with $\A = \U$ over $\bbZ\bbP$.

As mentioned, a useful tool for showing $\A = \U$ over $\bbZ\bbP$ is Muller's theory of locally acyclic cluster algebras and Banff algorithm.
In Theorem~\ref{thm:A=U} we adapt ideas for the theory of locally acyclic cluster algebras to other ground rings.
We focus on the particular ground $\bbZ\bbP_+$ in Corollary~\ref{cor:A=Ugeo}.
Though currently the methods for concluding $\A = \U$ over other ground rings are not as robust as for $\bbZ\bbP$.
\begin{problem}
Find sufficient conditions for $\A = \U$ over a given ground ring $\bbA$.
\end{problem}

\begin{problem}
Find a version of the Banff algorithm over a given ground ring $\bbA$.
\label{problem:Banff}
\end{problem}

Of course we have a version of the Banff algorithm by simply defining a covering pair to be a pair such that the corresponding localizations cover the cluster algebra.
However, a solution to Problem~\ref{problem:Banff} should allow for a covering pair to be detected efficiently in terms of the quiver or exchange matrix.
Also a solution should allow one to show $\A_{\bbA} = \U_{\bbA}$ for some nice class of examples. 
\bibliographystyle{alpha}
\bibliography{upper}

\newcommand{\etalchar}[1]{$^{#1}$}
\begin{thebibliography}{{Sta}18}

\bibitem[BD82]{BD}
A.~A. Belavin and V.~G. Drinfeld.
\newblock Solutions of the classical {Y}ang-{B}axter equation for simple {L}ie
  algebras.
\newblock {\em Funktsional. Anal. i Prilozhen.}, 16(3):1--29, 96, 1982.

\bibitem[BDP14]{BDP}
Thomas Br\"ustle, Gr\'egoire Dupont, and Matthieu P\'erotin.
\newblock On maximal green sequences.
\newblock {\em Int. Math. Res. Not. IMRN}, (16):4547--4586, 2014.

\bibitem[BFZ05]{BFZ}
Arkady Berenstein, Sergey Fomin, and Andrei Zelevinsky.
\newblock Cluster algebras. {III}. {U}pper bounds and double {B}ruhat cells.
\newblock {\em Duke Math. J.}, 126(1):1--52, 2005.

\bibitem[BMR{\etalchar{+}}]{CP}
Eric Bucher, John Machacek, Evan Runberg, Abe Yeck, and Ethan Zewde.
\newblock Building maximal green sequences via component preserving mutations.
\newblock arXiv:1902.02262.

\bibitem[CLS15]{CLS}
Ilke Canakci, Kyungyong Lee, and Ralf Schiffler.
\newblock On cluster algebras from unpunctured surfaces with one marked point.
\newblock {\em Proc. Amer. Math. Soc. Ser. B}, 2:35--49, 2015.

\bibitem[Eis14]{E1}
Idan Eisner.
\newblock Exotic cluster structures on {$SL_5$}.
\newblock {\em J. Phys. A}, 47(47):474002, 23, 2014.

\bibitem[Eis17a]{E2}
Idan Eisner.
\newblock Exotic cluster structures on {$SL_n$} with {B}elavin-{D}rinfeld data
  of minimal size, {I}. {T}he structure.
\newblock {\em Israel J. Math.}, 218(1):391--443, 2017.

\bibitem[Eis17b]{E3}
Idan Eisner.
\newblock Exotic cluster structures on {$SL_n$} with {B}elavin-{D}rinfeld data
  of minimal size, {II}. {C}orrespondence between cluster structures and
  {B}elavin-{D}rinfeld triples.
\newblock {\em Israel J. Math.}, 218(1):445--487, 2017.

\bibitem[FWZ]{FWZ}
Sergey Fomin, Lauren Williams, and Andrei Zelevinsky.
\newblock Introduction to cluster algebras. chapters 1-3.
\newblock arXiv:1608.05735 [math.CO].

\bibitem[FZ02]{FZI}
Sergey Fomin and Andrei Zelevinsky.
\newblock Cluster algebras. {I}. {F}oundations.
\newblock {\em J. Amer. Math. Soc.}, 15(2):497--529, 2002.

\bibitem[GSV10]{GSV-book}
Michael Gekhtman, Michael Shapiro, and Alek Vainshtein.
\newblock {\em Cluster algebras and {P}oisson geometry}, volume 167 of {\em
  Mathematical Surveys and Monographs}.
\newblock American Mathematical Society, Providence, RI, 2010.

\bibitem[GSV12]{GSV-BD}
M.~Gekhtman, M.~Shapiro, and A.~Vainshtein.
\newblock Cluster structures on simple complex {L}ie groups and
  {B}elavin-{D}rinfeld classification.
\newblock {\em Mosc. Math. J.}, 12(2):293--312, 460, 2012.

\bibitem[GSV14]{GSV-CGshort}
Michael Gekhtman, Michael Shapiro, and Alek Vainshtein.
\newblock Cremmer-{G}ervais cluster structure on {$SL_n$}.
\newblock {\em Proc. Natl. Acad. Sci. USA}, 111(27):9688--9695, 2014.

\bibitem[GSV17]{GSV-CGlong}
M.~Gekhtman, M.~Shapiro, and A.~Vainshtein.
\newblock Exotic cluster structures on {$SL_n$}: the {C}remmer-{G}ervais case.
\newblock {\em Mem. Amer. Math. Soc.}, 246(1165):v+94, 2017.

\bibitem[MO17]{log-can}
John Machacek and Nicholas Ovenhouse.
\newblock Log-canonical coordinates for {P}oisson brackets and rational changes
  of coordinates.
\newblock {\em J. Geom. Phys.}, 121:288--296, 2017.

\bibitem[Mul13]{MullerAdv}
Greg Muller.
\newblock Locally acyclic cluster algebras.
\newblock {\em Adv. Math.}, 233:207--247, 2013.

\bibitem[Mul14]{MullerSIGMA}
Greg Muller.
\newblock {$\mathcal A=\mathcal U$} for locally acyclic cluster algebras.
\newblock {\em SIGMA Symmetry Integrability Geom. Methods Appl.}, 10:Paper 094,
  8, 2014.

\bibitem[Sco06]{Scott}
Joshua~S. Scott.
\newblock Grassmannians and cluster algebras.
\newblock {\em Proc. London Math. Soc. (3)}, 92(2):345--380, 2006.

\bibitem[{Sta}18]{Stacks}
The {Stacks Project Authors}.
\newblock \textit{Stacks Project}.
\newblock \url{http://stacks.math.columbia.edu}, 2018.

\end{thebibliography}

\end{document}